\def\claim#1{\begin{trivlist}\item[\hskip\labelsep\bf#1]\it}
\def\endclaim{\end{trivlist}}
\numberwithin{equation}{section}
\newtheorem{theorem}{Theorem}[section]
\newtheorem{lemma}[theorem]{Lemma}
\newtheorem{definition}{Definition}[section]
\newtheorem{remark}{Remark}[section]
\newcommand{\C}{{\mathbb C}}
\newcommand{\ep}{\epsilon}
\newcommand{\im}{{\mbox{\rm Im\ }}}
\newcommand{\R}{{\mathbb R}}
\title{Strong unique continuation for two-dimensional anisotropic elliptic systems}
\author{Rulin Kuan\thanks{Department of Mathematics, National Cheng Kung University, Tainan, Taiwan. 
\newline (Email: rkuan@mail.ncku.edu.tw) Partially supported by the Ministry of Science and Technology, Taiwan
under project MOST 105 - 2115 - M - 006 - 017 - MY2.};\hspace{1cm}
Gen Nakamura\thanks{Department of Mathematics, Hokkaido University, Hokkaido, Japan. \newline (Email: nakamuragenn@gmail.com)};\hspace{1cm}
Satoshi Sasayama\thanks{Department of Mathematics, Hokkaido University, Hokkaido, Japan.
\newline (Email: sasayama@math.sci.hokudai.ac.jp)}}
\date{}
\begin{document}
\renewcommand{\theequation}{\thesection.\arabic{equation}}

\maketitle
\begin{abstract}
In this paper, we give the strong unique continuation property for a general two dimensional anisotropic elliptic system with real coefficients in a Gevrey class under the assumption that the principal symbol of the system has simple characteristics. The strong unique continuation property is derived by obtaining some Carleman estimate. The derivation of the Carleman estimate is based on transforming the system to a larger second order elliptic system with diagonal principal part which has complex coefficients.    
\end{abstract}

\section{Introduction}\label{sec1}
\setcounter{equation}{0}

In this paper we are concerned with the \emph{strong unique continuation
property} ( abbreviated by SUCP) for two dimensional anisotropic elliptic systems with real coefficients in a Gevrey class under the assumption that the principal part of the systems have simple characteristics. 

Let $\Omega\subset \mathbb{R}^2$ be a domain. We denote by $u(x)=[u_1(x), u_2(x)]^{\top}$ a two-component 
vector-valued function, where $^{\top}$ denotes the transpose. Let  $u(x)$ be a solution of 
\begin{equation}\label{eq0}
Lu+Fu=0\quad\text{in}\quad\Omega,
\end{equation}
where
$$(Lu)_{\alpha}(x)=\sum_{\beta=1}^2\sum_{j,\ell=1}^2A_{\alpha\beta}^{j\ell}(x)\partial_j\partial_{\ell}u_{\beta}\,\,\text{with $\partial_j=\partial/\partial x_j$}
$$
and
$$
(Fu)_{\alpha}(x)=\sum_{\beta=1}^2\sum_{\ell=1}^2B_{\alpha\beta}^{\ell}(x)\partial_{\ell}u_{\beta}+\sum_{\beta=1}^2C_{\alpha\beta}(x)u_{\beta}
$$
for $\alpha=1,2$. We assume that  $A_{\alpha\beta}^{j\ell}$ satisfies the major symmetry given as
\begin{equation}\label{symm}
A_{\alpha\beta}^{j\ell}(x)=A_{\beta\alpha}^{\ell
j}(x), \,\, x\in\Omega,\; 1\le j,\ell\le 2,\, 1\le\alpha,\beta\le 2
\end{equation}
and the strong ellipticity given as follows. There exists $\delta>0$ such that for any real vectors
$a=[a_1,a_2]^{\top}$ and $b=[b_1,b_2]^{\top},$ we have
\begin{equation}\label{elliptic}
\sum_{\alpha,\beta=1}^N\sum_{j,\ell=1}^2A_{\alpha\beta}^{j\ell}(x){a}_{\alpha}b_ja_{\beta}b_{\ell}\geq\delta|a|^2|b|^2,\,\,
x\in\Omega.
\end{equation}

A well-known example of \eqref{eq0} is the elasticity system where 
in addition to \eqref{symm}, $A_{\alpha\beta}^{j\ell}$ also satisfies the
minor symmetry given as
\begin{equation}\label{symm2}
A^{j\ell}_{\alpha\beta}(x)=A^{j\beta}_{\alpha\ell}(x),\,\,
x\in\Omega,\,\, 1\le\alpha,\beta,j,\ell\le 2.
\end{equation}

\begin{definition}${}$
\newline
Let $P$ be a partial differential operator defined in $\Omega$
\newline
{\rm (i)}
We say that $P$ has the \emph{unique continuation property} (abbreivated by UCP) in $\Omega$ if $u$ satisfies $Pu=0$ in $\Omega$ and it vanishes in a subdomain of $\Omega$, then $u=0$ in $\Omega$.
\newline
{\rm (ii)} We say that $P$ has the \emph{strong unique continuation property} in $\Omega$ if $u$ satisfies $Pu=0$ in $\Omega$ and $u$ vanishes at a point  $x_0\in\Omega$ of infinite order, i.e. $u$ satisfies for all $N>0$
\[
\int_{B_{r}(x_0)}|u|^2 dx=O(r^N), \quad\mbox{ as }r\to 0,
\] 
then $u=0$ in $\Omega$. 
\end{definition}


For the isotropic elasticity systems, the UCP has been proved in \cite{almo}, \cite{dero}, \cite{aity}, 
\cite{we1} and \cite{we2}. Unlike the isotropic case, very little
is known for the anisotropic elasticity systems. In this direction, the UCP and SUCP for
an elasticity system with residual stress was shown in
\cite{naw1} and \cite{lin} respectively. It is known that this system is no longer isotropic
due to the existence of residual stress. For other related results on the UCP, we refer the reader to \cite{pasinuw}.

For the system \eqref{eq0}, a UCP result was proved in \cite{naw2d}, where in addition to the smoothness assumption, $A_{\alpha\beta}^{j\ell}(x)\in
C^{0,1}(\Omega), B_{\alpha\beta}^{\ell}(x),
C_{\alpha\beta}(x)\in L^{\infty}(\Omega)$, some restrictions on the
characteristic roots and associated eigenvectors are required. In this paper we want to establish SUCP for \eqref{eq0} and simplify the restrictions on the characteristic roots. 
An essential ingredient in our proofs is to reduce the system
\eqref{eq0} to a second order system with diagonal principal part, which will be done by enlarging the size of the vector $u$ to
four. Our method of reduction of  \eqref{eq0} has evolved from a series
of papers \cite{lw}, \cite{naw2d}, and \cite{pasinuw} where the
system \eqref{eq0} was reduced to a first order elliptic system of
two variables. In this paper we will make further reduction and
transform so that \eqref{eq0} can be reduced to a second order system with diagonal principal part (see \eqref{eqnew}). To be more precise, the leading part of the
new system is
$$
\begin{bmatrix}\mathcal{P}_1(x,D)I&0\\0&\mathcal{P}_2(x,D)I\end{bmatrix},
$$
where
$\mathcal{P}_1(x,D)$ and $\mathcal{P}_2(x,D)$ are second order elliptic operators with \emph{complex} coefficients satisfying $$\mathcal{P}_2(x,D)=\overline{\mathcal{P}_1}(x,D),$$ where $\overline{\mathcal{P}_1}$ is the complex conjugate of $\mathcal{P}_1$.  Moreover, we want to remark that for any $x_0\in\Omega$ and $\xi\neq 0$ neither $\mathcal{P}_1(x_0,\xi)$ nor $\mathcal{P}_2(x_0,\xi)$ is real-valued. For such operators with lower order terms, SUCP is in general not true (see  \cite{al}. \cite{ab}). Nonetheless, when coefficients of $\mathcal{P}_1$ or $\mathcal{P}_2$ are Gevrey class of certain order, the strong uniqueness continuation property can be restored \cite{cgt}, \cite{le}. One of the main steps of our proof for Theorem~\ref{main} is to derive suitable Carleman estimates. Our derivation of the Carleman estimate is based on that given in \cite{le} for elliptic, second order scalar differential operators with Gevrey coefficients.

\medskip
Without loss of generality, $x_0$ is assumed to be the origin throughout the paper. In order to state our main result, we prepare the definitions of Gevrey class and the simple (complex) characteristics for system of differential operators.

\begin{definition}[Gevrey class]\label{Gevrey}
Given $\Omega\subset\mathbb{R}^n$, $n\in \mathbb{N}$ and $\sigma>1$, the Gevrey class $G^\sigma=G^{\sigma}(\Omega)$ of order $\sigma$ consists all functions $f\in C^{\infty}(\Omega)$ such that for every compact subset $K\subset\Omega$, there are constants $C$ and $R$ with 
\[
|\partial^{\alpha}g(x)|\leq C R^{|\alpha|}(|\alpha|!)^{\sigma},
\]
for all multi-indices $\alpha$ and $x\in K$.
\end{definition}
We remark that when $\sigma=1$, $G^1$ is the set of analytic functions $C^{\omega}(\Omega)$. That means $C^{\omega}(\Omega)\subset G^{\sigma}\subset C^{\infty}(\Omega)$ for $\sigma>1$.

\begin{definition}\label{simple}
Let $L$ be an elliptic operator and $\ell$ be the principal symbol of $L$. Then we say $L$ has simple characteristics in $\Omega$, if $\det(\ell)$ satisfies
\[
|\det(\ell)(x,\zeta)|+\sum_{j=1}^2\Big|\frac{\partial\det(\ell)(x,\zeta)}{\partial\zeta_j}\Big|\neq 0
\]
at each $x\in\Omega$ for any non-zero arbitrary complex vector $\zeta=(\zeta_1,\zeta_2)^{\top}$.
\end{definition}
We remark that this definition is similar to that given in \cite{sit} for scalar partial differential equations. 

\medskip
Now we state our main result.
\begin{theorem}\label{main}
Assume \eqref{eq0} has $G^{\sigma}$ coefficients with $\sigma>1$ and satisfies \eqref{symm} and \eqref{elliptic}. 
If $L$ has simple characteristics in $\Omega$, then there exists $\nu_0$ such that for any $\sigma<1+\frac{1}{\nu_0}$, SUCP holds for any solution $u\in G^\sigma$ of \eqref{eq0}.
\end{theorem}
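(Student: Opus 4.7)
The plan is to follow the roadmap laid out in the introduction: first reduce the $2\times 2$ second order system \eqref{eq0} to a $4\times 4$ second order system whose principal part is diagonal with scalar complex entries, then derive a Gevrey-type Carleman estimate for this reduced system, and finally convert the Carleman estimate into SUCP at the origin in the classical way.

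\textbf{Reduction to diagonal principal part.} The principal symbol $\ell(x,\xi)$ of $L$ is a $2\times 2$ matrix whose determinant is a degree-$4$ polynomial in $\xi$ with real coefficients. Strong ellipticity \eqref{elliptic} together with the simple-characteristic hypothesis of Definition~\ref{simple} forces $\det(\ell)(x,\xi)$ to factor, at each $x$, as a product of two distinct irreducible quadratic factors $\mathcal{P}_1(x,\xi)\mathcal{P}_2(x,\xi)$, which are complex conjugates of one another since $\det(\ell)$ has real coefficients. Following the idea indicated in the introduction and the earlier work \cite{lw,naw2d,pasinuw}, I would introduce an auxiliary vector $U=(u_1,u_2,Qu_1,Qu_2)^{\top}$ in which $Q$ is a first order operator tied to the factorization. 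A direct computation, using \eqref{symm} and the ellipticity \eqref{elliptic} to invert certain algebraic combinations of the $A_{\alpha\beta}^{j\ell}$, shows that $U$ satisfies a second order system whose principal part is
\begin{equation*}
\begin{bmatrix}\mathcal{P}_1(x,D)I&0\\0&\mathcal{P}_2(x,D)I\end{bmatrix},
\end{equation*}
plus lower order terms coming both from $F$ in \eqref{eq0} and from commutators produced by the reduction. The Gevrey regularity and simple characteristic assumptions propagate from $L$ to $\mathcal{P}_1$ and $\mathcal{P}_2=\overline{\mathcal{P}_1}$.

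\textbf{Carleman estimate for the reduced system.} The main analytic step is to establish an estimate of the form
\begin{equation*}
\sum_{|\alpha|\le 2}\tau^{3-2|\alpha|}\|e^{\tau\vp}\partial^{\alpha}U\|_{L^2}^2 \le C\,\|e^{\tau\vp}\,\mathrm{diag}(\mathcal{P}_1,\mathcal{P}_1,\mathcal{P}_2,\mathcal{P}_2)U\|_{L^2}^2
\end{equation*}
for $U$ compactly supported near the origin and for $\tau$ in an appropriate range coupled to a Gevrey subordination, with $\vp(x)=\vp(|x|)$ a suitably modified radial weight. Since neither $\mathcal{P}_1(x_0,\xi)$ nor $\mathcal{P}_2(x_0,\xi)$ is real for any $(x_0,\xi)\ne 0$, the standard pseudoconvex calculus fails, and indeed SUCP fails in the $C^\infty$ category \cite{al,ab}. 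To compensate, I would apply Lerner's Gevrey-weighted Carleman estimate \cite{le} scalar operator by scalar operator to $\mathcal{P}_1$ and $\mathcal{P}_2=\overline{\mathcal{P}_1}$, exploiting the fact that the principal part of the reduced system is diagonal. The off-diagonal first-order coupling produced by the reduction, together with $F$, are treated as a perturbation and absorbed into the left-hand side by taking $\tau$ large; the bound on $\tau$ coming from Lerner's construction is exactly what produces the constant $\nu_0$ and the admissible range $\sigma<1+1/\nu_0$ in the statement of Theorem~\ref{main}.

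\textbf{Deduction of SUCP and main obstacle.} With the Carleman estimate in hand, SUCP follows by the usual cut-off argument: if $u\in G^\sigma$ solves \eqref{eq0} and vanishes to infinite order at the origin, then the reduced vector $U$ does the same, and inserting $\chi U$ into the Carleman estimate for a suitable radial cut-off $\chi$ shows that the right-hand side is dominated by contributions supported away from the origin. Sending $\tau\to\infty$ and using the infinite order vanishing to eliminate the inner boundary terms forces $U$, hence $u$, to vanish in a neighborhood of the origin; the weak UCP result of \cite{naw2d} (whose algebraic hypotheses are implied by the simple characteristic condition here) then propagates this to all of $\Omega$. I expect the principal difficulty to lie in executing the Carleman estimate: carrying through Lerner's scalar Gevrey calculus for the non-selfadjoint operator $\mathcal{P}_1$ while keeping track of the off-diagonal perturbation, and verifying that the weight and the Gevrey exponent can be chosen compatibly with the range produced by \cite{le}, is the delicate technical core of the proof.
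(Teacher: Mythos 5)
Your proposal follows the same overall roadmap as the paper: reduce to a $4\times 4$ system with diagonal principal part $\mathrm{diag}(\mathcal{P}_1 I, \mathcal{P}_2 I)$, invoke Lerner's scalar Gevrey Carleman estimate diagonal block by diagonal block, and then absorb the lower order coupling to conclude. The reduction you sketch (enlarging the vector via a first-order operator $Q$ applied to $u$) matches the paper's $W=[u,\,\Psi u+\partial_1 u+T\partial_2 u]^\top$ in spirit, and Lemmas~\ref{G} and \ref{P_simple} in the paper confirm your claim that Gevrey regularity and simple characteristics propagate from $L$ to $\mathcal{P}_1,\mathcal{P}_2$. However, several steps as you have written them are imprecise in ways that matter.

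First, the form of the Carleman estimate: you write a classical $\tau^{3-2|\alpha|}$--weighted estimate with weight $e^{\tau\vp}$ for "$U$ compactly supported near the origin." This is not what \cite{le} provides, and would not deliver SUCP here. Lerner's estimate (Lemma~\ref{L_result}) uses the singular weight $e^{\frac{\tau}{\nu}|x|^{-\nu}}$ and carries $|x|$-powers, not pure $\tau$-powers, on the left: $\tau\|r^{-\nu-2}v\|+\|r^{-1}\nabla v\|\le C\|P_\tau v\|$. Because that weight blows up at the origin, the estimate can only be applied to functions that decay very fast there; the paper's Lemma~\ref{flat} (a Gevrey-flat $u$ factors as $e^{-|x|^{-\nu}}v$ with $v\in C^\infty$ when $\nu<1/(\sigma-1)$) is the bridge. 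That compatibility between the weight exponent $\nu$ and the Gevrey order $\sigma$ is exactly what produces the constraint $\sigma<1+1/\nu_0$; it does not come from a bound on $\tau$ in Lerner's construction, as you suggest.

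Second, "absorbed into the left-hand side by taking $\tau$ large" does not work as stated for the first-order coupling $\sum_{|\alpha|\le 1}K_\alpha D^\alpha W$: the gradient term on the left of \eqref{main_Car} carries only the factor $|x|^{-1}$, with no positive power of $\tau$. The absorption in the paper's proof is instead done by shrinking the radius $R_1$ so that $R_1^{-1}>2C'$, and only the zeroth-order term is absorbed using the extra $\tau$. Your version of the argument has a genuine hole here.

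Third, to globalize from a neighborhood of the origin to all of $\Omega$, the paper uses the standard open-closed/connectedness argument on the set where $u$ vanishes and is flat; invoking the weak UCP result of \cite{naw2d} as a black box would also work but imports unnecessary hypotheses and is not what the paper does.
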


We want to point out some connections between Theorem~\ref{main} and Calder\'on's type uniqueness theorems. 
Due to the ellipticity condition \eqref{elliptic}, four characterisitic roots corresponding to the symbol of $L$ appear in two complex conjugate pairs. Hence the multiplicity of the characteric roots is at most two. This is the assumption in most Calder\'on's type theorems \cite{zuily}. However, we cannot quarantee the smoothness of characteristic roots without putting extra restrictions on $A_{\alpha\beta}^{j\ell}(x)$. In fact, even with infinitely smooth $A_{\alpha\beta}^{j\ell}(x)$, characteristic roots, in general, depend on $x$ {\em discontinuously}. In view of the counterexample given by Pli\'s \cite{plis}, any attempt to prove UCP without putting any assumption on the characteristic roots is doomed to fail. Hence we put some condition on the characteristic roots as stated in Theorem \ref{main}. 

The rest of this paper is organized as follows. In Section 2 we reduce (1.1) to a second order system with diagonal principal part by enlarging the size of $u$. The Carleman estimate is given in Section 3. By using this estimate, SUCP for (1.1) is proved in Section 4. Finally we give some general two dimensional anisotropic elasticity system which satisfies the assumptions of Theorem \ref{main}.

\section{Reduction to a system with a diagonal leading term}\label{sec2}
\setcounter{equation}{0}

In this section we transform \eqref{eq0} into a second order elliptic system with diagonal principal part. We first follow the reduction given in \cite{naw2d} to transform \eqref{eq0} to a larger first order system. Rewrite \eqref{eq0} into
\begin{equation}\label{luu}
Lu+Fu=\Lambda_{11}\partial_1^2u+\Lambda_{12}\partial_1\partial_2u+\Lambda_{22}\partial_2^2u+Fu=0,
\end{equation}
where
$$\Lambda_{11}=[A_{\alpha\beta}^{11}],\  \Lambda_{12}=[A_{\alpha\beta}^{21}]+[A_{\alpha\beta}^{21}]^T ,\ \Lambda_{22}=[A_{\alpha\beta}^{22}].$$
Suppose that $T$ satisfies 
\begin{equation}\label{T}
T^2-\Lambda_{11}^{-1}\Lambda_{12}T+\Lambda_{11}^{-1}\Lambda_{22}=0
\end{equation}
and $\Psi$ satisfies 
\begin{equation}\label{syl}
\Psi T-(\Lambda_{11}^{-1}\Lambda_{12}-T)\Psi+M=0,
\end{equation}
where 
\[
M=-\Lambda_{11}^{-1}B_1 T+\Lambda_{11}^{-1}B_2-\Lambda_{11}^{-1}\Lambda_{12}\partial_2 T+T\partial_2 T-\partial_1 T
\]
and $B_1=[B^1_{\alpha\beta}]$, $B_2=[B^2_{\alpha\beta}]$. Then in terms of 
\begin{align}\label{W}
W=[w_1,w_2]^{\top}=[u,\Psi u+\partial_1u+T\partial_2u]^{\top},
\end{align}
\eqref{eq0} becomes
\begin{equation}\label{block}
\partial_1 W+{\cal M}_1\partial_2 W={\cal M}_0 W
\end{equation}
with
$$
{\cal M}_1=\begin{bmatrix}T&0\\
0&\Lambda_{11}^{-1}\Lambda_{12}-T\end{bmatrix}.
$$
We remark here that UCP for \eqref{eq0} was proved by reducing it to \eqref{block} in \cite{naw2d} under the condition that $A_{\alpha\beta}^{j\ell}\in C^{0,1}(\Omega)$ and $B_{\alpha\beta}^{\ell},C_{\alpha\beta}\in L^{\infty}(\Omega)$. To understand what is $T$, let $P(\lambda)$ be a matrix polynomial defined by
\begin{equation}\label{P_def}
P(\lambda):=\Lambda_{11}\lambda^2+\Lambda_{12}\lambda+\Lambda_{22}.
\end{equation}
Clearly $P(\lambda)$ is positive definite for any $\lambda\in\mathbb{R}$. Then it is well known that  $P(\lambda)$ can be factorized as
\begin{equation}\label{factor}
P(\lambda)=(\lambda-X^{\ast})\Lambda_{11}(\lambda-X)
\end{equation}
with
\begin{equation}\label{X}
X=(\oint_{\Gamma_+}\zeta
P(\zeta)^{-1}d\zeta)(\oint_{\Gamma_+}P(\zeta)^{-1}d\zeta)^{-1},
\end{equation}
where $\Gamma_+\subset\C_+=\{\mbox{Im}\,z>0\}$ is a closed contour enclosing all
spectra of $P$ in $\C_+$. Further, the above factorization is unique under the condition $\text{Spec}(X)\subset\C_+$, where $\text{Spec}(X)$ denotes the spectrum of $X$. See Threom 2.4 in \cite{ito} for reference.

Hence, we can simply choose $T=-X$ and thus $T$ is also invertible. With such choice of $T$, we now take a look
at $\Lambda_{11}^{-1}\Lambda_{12}-T$. Straightforward
computations give
\[
\begin{array}{rl}
\Lambda_{11}^{-1}P(\lambda)&=\lambda^2+\Lambda_{11}^{-1}\Lambda_{12}\lambda+\Lambda_{11}^{-1}\Lambda_{22}\\
{}&=\lambda^2+\Lambda_{11}^{-1}\Lambda_{12}\lambda+\Lambda_{11}^{-1}\Lambda_{12}T-T^2\\
{}&=(\lambda+\Lambda_{11}^{-1}\Lambda_{12}-T)(\lambda+T).
\end{array}
\]
Combining the above results with \eqref{factor}, we have
\begin{equation}\label{eq1011}
\Lambda_{11}^{-1}(\lambda+T^{\ast})\Lambda_{11}(\lambda+T)
=(\lambda+\Lambda_{11}^{-1}\Lambda_{12}-T)(\lambda+T)
\end{equation}
It is easy to see from \eqref{eq1011} that
\begin{equation}\label{cong}
\overline{\text{Spec}(T)}=\text{Spec}(\Lambda_{11}^{-1}\Lambda_{12}-T)
\end{equation}
and therefore
\begin{equation}\label{noeig}
\text{Spec}(T)\cap\text{Spec}(\Lambda_{11}^{-1}\Lambda_{12}-T)=\emptyset.
\end{equation}

Next, to further simplify \eqref{block}, we denote
$$
T=\begin{bmatrix}t_{11}&t_{12}\\
t_{21}&t_{22}\end{bmatrix},\quad\text{cof}^\prime T=\begin{bmatrix}t_{22}&-t_{12}\\
-t_{21}&t_{11}\end{bmatrix},
$$
Here note that $\text{cof}^\prime T$ is the transpose of the cofactor matrix of $T$.
Similarly, we can define $\text{cof}^\prime(\Lambda_{11}^{-1}\Lambda_{12}-T)$. We now consider the composition of differential operators
$$
{\cal P}_1(x,D):=-(\partial_1+\text{cof}^\prime\
T\partial_2)(\partial_1+T\partial_2)
$$
and
$$
{\cal P}_2(x,D):=-(\partial_1+\text{cof}^\prime(\Lambda_{11}^{-1}\Lambda_{12}-T)
\partial_2)(\partial_1+(\Lambda_{11}^{-1}\Lambda_{12}-T)\partial_2),
$$
where $D=(D_1, D_2)$ with each $D_j$ given by $D_j=-i \partial_j$. It is clear that the principal symbols of ${\cal P}_1$ and ${\cal P}_2$ are
$$
(\xi_1^2+\text{tr}T\xi_1\xi_2+\text{det}T\xi_2^2)I
$$
and
$$
(\xi_1^2+\text{tr}(\Lambda_{11}^{-1}\Lambda_{12}-T)\xi_1\xi_2+\text{det}(\Lambda_{11}^{-1}\Lambda_{12}-T)\xi_2^2)I
$$
respectively. Also, it is easy to see that $\mathcal{P}_1$ and $\mathcal{P}_2$ have complex coefficients.

Having these arguments in mind, we apply the differential operators
$$
-\partial_1-\begin{bmatrix}\text{cof}^\prime\, T&0\\ 0&\text{cof}^\prime\
(\Lambda_{11}^{-1}\Lambda_{12}-T)\end{bmatrix}\partial_2
$$
on both sides of \eqref{block} and obtain that
\begin{equation}\label{eqnew}
{\cal P}(x,D)W:=\begin{bmatrix}{\cal P}_1(x,D)I&0\\0&{\cal P}_2(x,D)I\end{bmatrix}W=\sum_{|\alpha|\leq
1}K_{\alpha}(x)D^{\alpha}W,
\end{equation}
where
\begin{equation}\label{P1}
{\cal P}_1(x,D)=D_1^2+\text{tr}TD_1D_2+\text{det}TD_2^2,
\end{equation}
and
\begin{equation}\label{P2}
{\cal P}_2(x,D)=D_1^2+\text{tr}(\Lambda_{11}^{-1}\Lambda_{12}-T)D_1D_2+\text{det}(\Lambda_{11}^{-1}\Lambda_{12}-T)D_2^2.
\end{equation}
In view of \eqref{cong}, we have that
$$\text{tr}T=\overline{\text{tr}(\Lambda_{11}^{-1}\Lambda_{12}-T)}\quad \text{and}\quad \text{det}T=\overline{\text{det}(\Lambda_{11}^{-1}\Lambda_{12}-T)},$$
which implies that ${\cal P}_2(x,D)=\overline{{\cal P}_1}(x,D)$. 

In the next section, we assume that the coefficients of \eqref{eq0} are in some Gevrey class and prove SUCP for \eqref{eq0}. By the derivation of $\cal{P}$, SUCP for \eqref{eq0} will follow if we prove that for \eqref{eqnew}. The proof will rely on suitable Carleman estimate for $\cal{P}$, which is the main theme of the next section.

\section{Carleman estimate for $\mathcal{P}$}\label{sec3}
\setcounter{equation}{0}
In this section, we assume that the coefficients $A_{\alpha\beta}^{j\ell}, B_{\alpha\beta}^{\ell}, C_{\alpha\beta}$ belong to some Gevrey class and we will derive a suitable Carleman estimate for ${\cal P}$ defined by \eqref{eqnew}. Although ${\cal P}$ has a diagonal principal part whose diagonal elements consists of scalar, simple characterisitics, second order differential operators with complex coefficients, these kind of partial differential operators in the diagonal elements do not have SUCP in general. Neverthelees,  if the coefficients are in some Gevrey class, it is given in \cite{le} that the differential operator in each diagonal element can have SUCP by deriving the following Carleman estimate. 
\begin{lemma}\label{L_result}
Let $P$ be a second order elliptic operator with $C^{\infty}$ coefficients which are defined in an neighborhood $\Omega$ of $0$ in $\R^2$. Let $p$ be the principal symbol of $P$. Suppose $p$ has simple characteristics in $\Omega$. Then there exists a real number $\nu_0>0$, depending only on $p(0,\R^2\setminus\{0\})$, such that for $\nu>\nu_0$ and $v\in C^{\infty}(\Omega)$, we have the following Carleman estimate. That is there exist $C,R_0>0$ and large enough $\tau_0>0$ such that for $\tau>\tau_0$, 
\begin{equation}\label{v_est}
\tau\|r^{-\nu-2}v\|_{L^2(B_{R_0}(0))}
+\|r^{-1}\nabla v\|_{L^2(B_{R_0}(0))}
\leq C \|P_{\tau}v\|_{L^2(B_{R_0}(0))},
\end{equation}
where $P_{\tau}=e^{\frac{\tau}{\nu}r^{-\nu}}P e^{-\frac{\tau}{\nu}r^{-\nu}}$, $r=|x|$ and $C,R_0,\tau_0$ are independent of $v$ and $\tau$.
\end{lemma}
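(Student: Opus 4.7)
The proof follows the Carleman--H\"ormander conjugation method adapted to the singular radial weight $\phi(x)=\frac{\tau}{\nu}r^{-\nu}$, in the spirit of \cite{le}. First, form the conjugated operator $P_{\tau}=e^{\phi}Pe^{-\phi}$, whose principal symbol is $p(x,\xi+i\nabla\phi)$, and decompose $P_{\tau}=A+iB$ with $A$ and $B$ formally self-adjoint. The basic identity
\begin{equation*}
\|P_{\tau}v\|^{2}=\|Av\|^{2}+\|Bv\|^{2}+(i[A,B]v,v)
\end{equation*}
reduces the task to producing, modulo terms controlled by $\|Av\|^{2}+\|Bv\|^{2}$, a positive lower bound of the form $c\tau\|r^{-\nu-2}v\|^{2}+c\|r^{-1}\nabla v\|^{2}$ for the commutator term.

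The required positivity of the Poisson bracket $\{a,b\}$ of the principal symbols of $A$ and $B$ on the characteristic set $\{a=b=0\}\setminus\{0\}$ is precisely H\"ormander's pseudo-convexity condition. For operators with \emph{real} principal symbol this is achieved once $\nu$ is large enough to make $r^{-\nu}$ strictly pseudo-convex. The essential difficulty in our setting is that $P$ has a genuinely complex principal symbol, so the standard pseudo-convexity may fail, as the counterexamples of \cite{al,ab} make clear. The simple-characteristics hypothesis remedies this: since $\partial_{\xi}p(0,\zeta)\neq 0$ whenever $p(0,\zeta)=0$, one can factor the principal symbol near the origin as a product of two first-order factors whose imaginary parts are uniformly separated on the relevant set. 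A threshold $\nu_{0}$ depending only on $p(0,\R^{2}\setminus\{0\})$ can then be chosen so that the convexity generated by the Hessian of $\phi$ dominates this loss of realness, which yields the desired lower bound at the symbol level.

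To pass from the symbol inequality to the operator inequality \eqref{v_est} I would apply a sharp G\aa rding-type estimate. The weight $\tau r^{-\nu-2}$ on the left is $|\nabla^{2}\phi|$ up to constants, which is the natural weight coming out of the commutator of the second-order principal parts, while the gradient term $\|r^{-1}\nabla v\|$ is recovered from the ellipticity contained in $\|Av\|^{2}+\|Bv\|^{2}$ after localizing near the origin. The main obstacle is the simultaneous absorption of two classes of remainder: those produced by the sharp G\aa rding inequality on symbols vanishing only to first order on the characteristic variety, and those generated by the lower-order terms of $P$, whose coefficients are merely $C^{\infty}$. Both are controlled by taking $\tau>\tau_{0}$ sufficiently large with $\tau_{0}$ depending on $\nu$, which fixes the admissible range $\nu>\nu_{0}$, $\tau>\tau_{0}$ in the statement.
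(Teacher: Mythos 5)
The paper does not actually prove this lemma; it imports it from Lerner's work \cite{le}, and Remark~3.1(ii) records the key idea of that proof: because $p$ has simple characteristics, one can write $P=X_1X_2+Q$ with first-order operators $X_1,X_2,Q$ having smooth coefficients, and the Carleman estimate is then obtained by conjugating and estimating the first-order factors. Your proposal instead runs the classical commutator computation on the full second-order conjugated operator: $P_\tau=A+iB$ with $A,B$ self-adjoint, $\|P_\tau v\|^2=\|Av\|^2+\|Bv\|^2+(i[A,B]v,v)$, and positivity of the Poisson bracket via pseudo-convexity plus sharp G\aa rding. That is a genuinely different route, and for a second-order operator whose principal symbol is \emph{not} real up to a complex scalar it runs into exactly the wall you name yourself: the characteristic set $\{a=b=0\}$ need not be a hypersurface, $\{a,b\}$ has no reason to be signed there, and this is precisely the mechanism exploited by the Alinhac--Baouendi counterexamples. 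You then invoke the factorization of $p$ into first-order pieces as a ``remedy,'' but you never explain how the factorization feeds back into the $A+iB$ decomposition of the full second-order $P_\tau$. As written you are describing two distinct proof strategies and gluing them together without a bridge; the factorization is not a patch to the commutator argument, it is the replacement for it.

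The step where you assert that ``the convexity generated by the Hessian of $\phi$ dominates this loss of realness'' for $\nu>\nu_0$ is the entire content of the lemma and is left unexplained. The threshold $\nu_0$ depends quantitatively on the range cone $p(0,\R^2\setminus\{0\})$: Remark~3.1(i) records $\nu_0\geq 2\sin\varphi/(1-\sin\varphi)$ when that cone is a proper convex cone of half-angle $\varphi<\pi/2$, and in the application relevant to this paper the cone is actually all of $\C\setminus\{0\}$, where even that formula degenerates and a separate argument is needed. Your sketch does not address how $\nu_0$ is extracted from the geometry of the cone. Finally, the sharp-G\aa rding passage from a symbol inequality to the operator inequality \eqref{v_est} is not routine here: the weight $\exp(\tau r^{-\nu}/\nu)$ is singular and anisotropic as $r\to 0$, so $P_\tau$ is not a uniformly elliptic pseudodifferential operator near the origin, and the symbol-to-operator step requires scaling or an anisotropic calculus adapted to that degeneration (this is a substantial part of \cite{le}). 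Taken together these are not finishing details but the actual mathematical content of the lemma, so the proposal has a genuine gap.
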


\begin{remark}${}$
\newline
{\rm (i)}  Observe that $p(0,\mathbb{R}^2\setminus\{0\})$ is either a convex cone of $\mathbb{C}\setminus\{0\}$ or it is $\mathbb{C}\setminus\{0\}$. If it is a convex cone  with angle $2\varphi$, $0\leq\varphi<\frac{\pi}{2}$,  $\nu_0$ can be taken more precisely as
\[
\nu_0\geq \frac{2\sin\varphi}{1-\sin\varphi}.
\]
In either case, there exists $\nu_0$ as stated in Lemma \ref{L_result}. See \cite{cgt}, \cite{le} for these. In our case, the cone we have for each principal symbols of $\mathcal{P}_j$, $j=1,2$, is ${\mathbb C}\setminus\{0\}$.
\newline
{\rm (ii)} It is worth to mention that the idea of proving this lemma is to write $P=X_1X_2+Q$ with first order differential operators $X_1,X_2,Q$ with $C^{\infty}$ coefficients. This smoothness of the coefficients follows from the assumption that $P$ is simple characteristics, and this smoothness is very important for proving this lemma.  

\end{remark}

By using the above lemma, we can establish a Carleman estimate suitable to derive SUCP for our system \eqref{eqnew}. To do so, we need some lemmas.

First of them is that the above estimate \eqref{v_est} fits nicely to Gevrey functions. Actually, Gevrey functions have the following property (see  \cite{cgt} and \cite{le}).
\begin{lemma}\label{flat}
Let $\Omega$ be an open neighborhood of $0$ in $\R^n$ and $u\in G^{\sigma}$. 
If $u$ is flat at the origin (i.e. $\partial_x^{\alpha}u=0$ at the origin for all multi-index $\alpha$), then there exists $v\in C^\infty(\Omega)$ flat at the origin such that
\[
u=\exp(-|x|^{-\nu})v
\]
provided $1+\nu^{-1}>\sigma$, i.e. $\nu<\frac{1}{\sigma-1}$. 
\end{lemma}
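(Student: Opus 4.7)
The plan is to define $v(x):=\exp(|x|^{-\nu})u(x)$ for $x\neq 0$ and $v(0):=0$, so that the factorization $u=\exp(-|x|^{-\nu})v$ holds on $\Omega\setminus\{0\}$ by construction. Everything then reduces to showing that this $v$ lies in $C^\infty(\Omega)$ and satisfies $\partial^\alpha v(0)=0$ for every multi-index $\alpha$, i.e.\ is flat at the origin.

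The decisive ingredient is a pointwise super-polynomial decay estimate for flat Gevrey functions. Since $u$ is flat at $0$, every $\partial^\beta u$ is also flat at $0$, and the Gevrey bound $|\partial^\alpha u(y)|\leq C R^{|\alpha|}(|\alpha|!)^\sigma$ implies (via the trivial inequality $(|\alpha|+|\beta|)!\leq 2^{|\alpha|+|\beta|}|\alpha|!\,|\beta|!$) that $\partial^\beta u\in G^\sigma$ with adjusted constants. I would apply Taylor's formula with integral remainder of order $N$ to the flat function $\partial^\beta u$ and insert the Gevrey bound, obtaining
\[
|\partial^\beta u(x)|\leq C_\beta (R|x|)^N (N!)^{\sigma-1}.
\]
Stirling and optimization in $N$ (the minimum being attained near $N\sim (R|x|)^{-1/(\sigma-1)}$) then yield
\[
|\partial^\beta u(x)|\leq C_\beta' \exp\bigl(-c\,|x|^{-1/(\sigma-1)}\bigr)
\]
for all sufficiently small $|x|$, with a constant $c>0$ depending only on $R$ and not on $\beta$.

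With that estimate in hand, the rest is a Leibniz computation. Fa\`a di Bruno, together with the fact that $\partial^\gamma(|x|^{-\nu})$ is homogeneous of degree $-\nu-|\gamma|$, gives $|\partial^\gamma\exp(|x|^{-\nu})|\leq C_\gamma|x|^{-|\gamma|(\nu+1)}\exp(|x|^{-\nu})$. Expanding $\partial^\alpha v$ by Leibniz and combining the two bounds produces
\[
|\partial^\alpha v(x)|\leq C_\alpha |x|^{-|\alpha|(\nu+1)}\exp\bigl(|x|^{-\nu}-c\,|x|^{-1/(\sigma-1)}\bigr).
\]
The assumption $\nu<1/(\sigma-1)$ is exactly what makes $|x|^{-\nu}=o\bigl(|x|^{-1/(\sigma-1)}\bigr)$ as $x\to 0$, so the exponential factor decays faster than any power of $|x|$ and dominates the singular prefactor. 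Hence $\partial^\alpha v(x)\to 0$ as $x\to 0$ for each $\alpha$, and a standard removable-singularity argument for $C^\infty$ functions (the continuous extension of every partial derivative across an isolated point upgrades the function to $C^\infty$ there) shows that $v$ extends to an element of $C^\infty(\Omega)$ with $\partial^\alpha v(0)=0$ for every $\alpha$.

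The main obstacle is the decay estimate of the second paragraph: one needs the exponent to be exactly $1/(\sigma-1)$, since this is what matches the threshold $\nu<1/(\sigma-1)$ stated in the lemma. Everything else, including the Fa\`a di Bruno bound on derivatives of the weight and the $C^\infty$ extension at the isolated point, is routine bookkeeping once the Gevrey-flatness decay has been established.
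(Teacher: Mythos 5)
The paper does not give its own proof of this lemma; it simply records it as a known property of flat Gevrey functions and cites \cite{cgt} and \cite{le}, so there is no in-paper argument to compare against. Your proposal is a correct, self-contained proof and it reproduces the standard argument underlying those references. The decisive step is, as you identify, the decay estimate for a flat $G^\sigma$ function: Taylor with integral remainder of order $N$ gives $|\partial^\beta u(x)|\leq C_\beta(R|x|)^N(N!)^{\sigma-1}$ (the constant $R$ after absorbing the dimensional multinomial factor and the $2^\sigma$ coming from $(|\alpha|+|\beta|)!\le 2^{|\alpha|+|\beta|}|\alpha|!\,|\beta|!$ is independent of $\beta$), and optimizing $N\sim(R|x|)^{-1/(\sigma-1)}$ via Stirling yields $|\partial^\beta u(x)|\leq C_\beta'\exp(-c|x|^{-1/(\sigma-1)})$ with $c$ independent of $\beta$. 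Combined with the Fa\`a di Bruno bound $|\partial^\gamma e^{|x|^{-\nu}}|\lesssim|x|^{-|\gamma|(\nu+1)}e^{|x|^{-\nu}}$ and Leibniz, the exponent $|x|^{-\nu}-c|x|^{-1/(\sigma-1)}\to-\infty$ faster than any power of $|x|$ precisely because $\nu<1/(\sigma-1)$, so every $\partial^\alpha v$ has continuous extension by zero at the origin; the inductive mean-value argument (partials of $\partial^\alpha v$ exist in a punctured neighborhood and extend continuously, hence $\partial^\alpha v$ is $C^1$ across $0$) then upgrades $v$ to $C^\infty(\Omega)$, flat at $0$. Nothing is missing; you have correctly identified both the key estimate and the threshold mechanism that makes the hypothesis $\nu<1/(\sigma-1)$ sharp.
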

In order to restate Lemma~\ref{L_result} using Lemma \ref{flat}, we need to show $\mathcal{P}_1,\mathcal{P}_2$ are second order elliptic with $C^{\infty}(\Omega)$ coefficients and have simple characteristics. It is easy to check that $\mathcal{P}_1$ and $\mathcal{P}_2$ are elliptic operators due to \eqref{symm} and \eqref{elliptic}. The rest of properties of $\mathcal{P}_1$ and $\mathcal{P}_2$ are shown in the following lemma.
\begin{lemma}\label{G}
Suppose the coefficients of \eqref{eq0} are $G^{\sigma}$ functions, i.e.  $A_{\alpha\beta}^{j\ell},B_{\alpha\beta}^{\ell},C_{\alpha\beta}\in G^{\sigma}$, for some $\sigma>1$ and for all $1\leq\alpha,\beta,j,\ell\leq 2$. Then $T=-X$ and $\Psi$, where $X$ and $\Psi$ satisfy \eqref{X} and \eqref{syl} respectively, are both in $G^{\sigma}$. As a consequence $P_1$ and $P_2$ which are defined respectively in \eqref{P1} and \eqref{P2}, also have $G^{\sigma}$ coefficients.
\end{lemma}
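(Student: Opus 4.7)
The plan is to reduce both claims to standard closure properties of the Gevrey class, namely that $G^{\sigma}(\Omega)$ is closed under addition, multiplication, differentiation, and entrywise inversion of matrices whose determinant does not vanish, and that $G^{\sigma}$-regularity is preserved by integration with respect to a parameter ranging over a compact set on which the integrand is analytic in the parameter and Gevrey in $x$, uniformly in the parameter. These are classical facts (the quantitative bounds being of Fa\`a-di-Bruno type, as used in \cite{cgt}, \cite{le}).

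First I would treat $T=-X$. Fix $x_{0}\in\Omega$ and choose a closed curve $\Gamma_{+}\subset\C_{+}$ enclosing the zeros of $\det P(\zeta)(x_{0})$ lying in $\C_{+}$. By strong ellipticity \eqref{elliptic} no zero of $\det P(\zeta)$ is real, and by continuity of the roots in $x$ the same curve $\Gamma_{+}$ still encloses exactly the $\C_{+}$-zeros of $\det P(\zeta)(x)$ for all $x$ in some neighborhood $U$ of $x_{0}$, with $\det P(\zeta)(x)\neq 0$ on $\Gamma_{+}\times U$. Since the entries of $P(\zeta)(x)$ are polynomials in $\zeta$ with $G^{\sigma}(U)$ coefficients, so are the entries of its adjugate, and $1/\det P(\zeta)(x)\in G^{\sigma}(U)$ with estimates uniform in $\zeta\in\Gamma_{+}$. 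Hence each entry of $P(\zeta)^{-1}$ is in $G^{\sigma}(U)$ uniformly in $\zeta\in\Gamma_{+}$, and the two contour integrals in \eqref{X} lie in $G^{\sigma}(U)$. The second of these is invertible (this is what underlies the well-definedness of the factorization \eqref{factor}; see Theorem~2.4 in \cite{ito}), and by closure of $G^{\sigma}$ under matrix inversion we conclude $X\in G^{\sigma}(U)$ and therefore $T=-X\in G^{\sigma}(U)$.

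Next I would treat $\Psi$. Rewrite \eqref{syl} as the $4\times 4$ linear system for the entries of $\Psi$,
\begin{equation*}
S(x)\,\mathrm{vec}(\Psi)=-\mathrm{vec}(M),\qquad S(x):=T(x)^{\top}\otimes I_{2}-I_{2}\otimes\bigl(\Lambda_{11}^{-1}\Lambda_{12}-T\bigr)(x).
\end{equation*}
The eigenvalues of $S(x)$ are exactly the differences $\lambda_{i}(T)-\mu_{j}(\Lambda_{11}^{-1}\Lambda_{12}-T)$, which are all non-zero throughout $U$ by \eqref{noeig}, so $\det S(x)\neq 0$. From the previous step $T\in G^{\sigma}(U)$, and together with the hypotheses $A_{\alpha\beta}^{j\ell},B_{\alpha\beta}^{\ell}\in G^{\sigma}$ and closure under differentiation, both $M$ and $S$ are in $G^{\sigma}(U)$. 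Inverting $S$ yields $\Psi=-S^{-1}\mathrm{vec}(M)\in G^{\sigma}(U)$.

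The consequence for $\mathcal{P}_{1}$ and $\mathcal{P}_{2}$ is then immediate from \eqref{P1}--\eqref{P2}: their coefficients $\mathrm{tr}\,T$, $\det T$, $\mathrm{tr}(\Lambda_{11}^{-1}\Lambda_{12}-T)$, $\det(\Lambda_{11}^{-1}\Lambda_{12}-T)$ are polynomial expressions in entries of $T$, $\Lambda_{11}^{-1}$, and $\Lambda_{12}$, all of which are in $G^{\sigma}(U)$. The main technical obstacle I anticipate is the first step: making precise that the parameter-dependent contour integral defining $X$ really preserves the Gevrey bounds \emph{uniformly} in $\zeta\in\Gamma_{+}$, so that the matrix inversion at the end can be applied with quantitative control. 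This quantitative uniformity is exactly the sort of estimate that must be extracted from the Gevrey algebra facts, and it is the one point in the argument that deserves careful verification rather than a casual appeal to closure.
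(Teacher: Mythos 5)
Your proof follows essentially the same two-step approach as the paper: showing $T\in G^{\sigma}$ via the contour-integral formula for $X$, closure of $G^{\sigma}$ under algebraic operations and matrix inversion, and the invertibility guaranteed by Theorem~2.4 of \cite{ito}, and then solving the Sylvester equation for $\Psi$ as a $4\times4$ linear system whose coefficient matrix is invertible thanks to \eqref{noeig}. The only real differences are notational and expository --- you write the linearized Sylvester system as the Kronecker product $T^{\top}\otimes I_{2}-I_{2}\otimes(\Lambda_{11}^{-1}\Lambda_{12}-T)$ while the paper works with the explicit $4\times4$ matrix $M_{AB}$ and Cramer's rule, and you are slightly more careful than the paper in flagging that the Gevrey estimates for the integrand of the contour integral must hold uniformly in $\zeta\in\Gamma_{+}$.
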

\begin{proof}
We first we prove $T\in G^{\sigma}$. Recall that $T=-X$ and $X$ satisfies \eqref{X}, i.e. for any fixed $x\in\Omega$,
\[
X(x)=(\oint_{\Gamma_+(x)}\zeta
P(x,\zeta)^{-1}d\zeta)(\oint_{\Gamma_+(x)}P(x,\zeta)^{-1}d\zeta)^{-1},
\]
where $P(x,\zeta)$ is $P(\lambda)$ with $\lambda=\zeta$ given by \eqref{P_def} clarifying its dependency on $x\in\Omega$, 
$\Gamma_+(x)$ is a contour in $\C_+=\{\im \zeta>0\}$ enclosing all the roots of $\text{det} P(x,\zeta)=0$ in $\zeta\in\C_+$ counter-clockwise which depends on $x\in\Omega$. Note that  $\Gamma_+(x)$ can be taken locally invariant, because the integrands are holomorphic with respect to $\zeta$. Together with this and the well known fact that $G^\sigma$ is a field with respect to addition and multiplication which are consistent to each other, we can easily see that
\[\int_{\Gamma_+(x)}P(x,\zeta)^{-1}d\zeta,\,\,\int_{\Gamma_+(x)}\zeta P(x,\zeta)^{-1}d\zeta\in G^\sigma.\] Hence it suffices to show
\[
\Big(\int_{\Gamma_+(x)}P(x,\zeta)^{-1}d\zeta\Big)^{-1}\in G^\sigma.
\]  
For this we only need to know the invertibility of $\int_{\Gamma_+(x)}P(x,\zeta)^{-1}d\zeta$ which is guaranteed by Theorem 2.4 of \cite{ito}. We also can see Section 3.7 in \cite{wloka_book} for another reference. 
\par
Next we prove $\Psi\in G^{\sigma}$. Recall that \eqref{syl}, i.e. $\Psi$ satisfies
\[
\Psi T-(\Lambda_{11}^{-1}\Lambda_{12}-T)\Psi+M=0,
\]
Concerning the unique solvability of this equation, we consider a more general equation 
\begin{equation}\label{13}
\Psi A-B\Psi+C=0.
\end{equation}
Write 
\[
\Psi=\begin{pmatrix}
a&b\\
c&d
\end{pmatrix}\quad\mbox{and}\quad
C=\begin{pmatrix}
c_1&c_2\\
c_3&c_4
\end{pmatrix},
\]
then \eqref{13} can be written as 
\begin{equation}\label{13.1}
M_{AB}\begin{pmatrix}
a\\b\\c\\d
\end{pmatrix}
+\begin{pmatrix}
c_1\\c_2\\c_3\\c_4
\end{pmatrix}=0,
\end{equation}
where $M_{AB}$ is a $4\times 4$ matrix, whose entries depend only on the summation, subtraction and multiplication of some entries of $A$ and $B$.
We observe that
\[
\begin{aligned}
&\text{Spec}(A)\cap \text{Spec}(B)=\emptyset\\
\Leftrightarrow &\forall \mbox{ matrix }C,\, \exists!\,\Psi \mbox{ solves }\eqref{13}\\
\Leftrightarrow &\forall \mbox{ matrix }C,\, \exists!\,\Psi \mbox{ solves }\eqref{13.1}\\
\Leftrightarrow & \det(M_{AB})\neq 0
\end{aligned}.
\]
Therefore if $\text{Spec}(A)\cap \text{Spec}{B}=\emptyset$, we have
\[
\begin{pmatrix}
a\\b\\c\\d
\end{pmatrix}=-M_{AB}^{-1}\begin{pmatrix}
c_1\\c_2\\c_3\\c_4
\end{pmatrix}=-\frac{\text{cof}^\prime\,(M_{AB})}{\det(M_{AB})}\begin{pmatrix}
c_1\\c_2\\c_3\\c_4
\end{pmatrix}.
\]
Hence, if $A,B,C$ are in $G^{\sigma}$, $\Psi$ is also in $G^{\sigma}$.

Let $A=T$, $B=\Lambda_{11}^{-1}\Lambda_{12}-T$ and $C=M$. Then due to \eqref{noeig} and our assumption, the coefficients of \eqref{eq0} are in $G^{\sigma}$, we obtain $A,B,C\in G^{\sigma}$, and hence $\Psi\in G^{\sigma}$. 

\end{proof}

\begin{lemma}\label{P_simple}
Suppose $L$ satisfies \eqref{symm} and \eqref{elliptic} . If $L$ has simple characteristics in $\Omega$, then $\mathcal{P}_1$ and $\mathcal{P}_2$ both have simple characteristics in $\Omega$, where $\mathcal{P}_1,\mathcal{P}_2$ are defined in \eqref{P1} and \eqref{P2}.
\end{lemma}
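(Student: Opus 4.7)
The plan is to reduce the simple-characteristics property of $\mathcal{P}_{1}$ and $\mathcal{P}_{2}$ to that of $L$ via the multiplicative factorization
\[
\det\ell(x,\xi)=\det(\Lambda_{11}(x))\,p_{1}(x,\xi)\,p_{2}(x,\xi),
\]
where $\ell(x,\xi)$ denotes the principal symbol of $L$ and $p_{j}(x,\xi)$ the (scalar) principal symbol of $\mathcal{P}_{j}$.

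First I would recast each $p_{j}$ as a $2\times 2$ determinant. Since $\det(sI+M)=s^{2}+(\text{tr}\,M)s+\det M$ for any $2\times 2$ matrix $M$, taking $s=\xi_{1}$ and $M=\xi_{2}T(x)$ gives $p_{1}(x,\xi)=\det(\xi_{1}I+\xi_{2}T(x))$, and analogously $p_{2}(x,\xi)=\det\bigl(\xi_{1}I+\xi_{2}(\Lambda_{11}^{-1}\Lambda_{12}-T)(x)\bigr)$. With $T=-X$ the factorization \eqref{factor} reads $P(\lambda)=(\lambda+T^{\ast})\Lambda_{11}(\lambda+T)$, so $\det P(\lambda)=\det(\lambda I+T^{\ast})\,\det(\Lambda_{11})\,\det(\lambda I+T)$. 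Homogenizing through $\ell(x,\xi)=\xi_{2}^{2}\,P(\xi_{1}/\xi_{2})$ then yields $\det\ell(x,\xi)=\det(\Lambda_{11})\,\det(\xi_{1}I+\xi_{2}T^{\ast})\,\det(\xi_{1}I+\xi_{2}T)$. Taking determinants in \eqref{eq1011} and cancelling the common polynomial factor $\det(\lambda I+T)$ (this is a polynomial identity, so the cancellation is legitimate) gives $\det(\xi_{1}I+\xi_{2}T^{\ast})=p_{2}(x,\xi)$, which produces the displayed product formula. Note that $\det(\Lambda_{11}(x))>0$ by the strong ellipticity \eqref{elliptic}.

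The lemma then follows by a Leibniz-rule argument. Assume, for contradiction, that at some $x_{0}\in\Omega$ and some $\zeta_{0}\in\mathbb{C}^{2}\setminus\{0\}$ one has $p_{1}(x_{0},\zeta_{0})=0$ and $\partial_{\zeta_{j}}p_{1}(x_{0},\zeta_{0})=0$ for $j=1,2$. Then $\det\ell(x_{0},\zeta_{0})=0$ and
\[
\partial_{\zeta_{j}}\det\ell(x_{0},\zeta_{0})=\det(\Lambda_{11}(x_{0}))\bigl[(\partial_{\zeta_{j}}p_{1})p_{2}+p_{1}(\partial_{\zeta_{j}}p_{2})\bigr](x_{0},\zeta_{0})=0
\]
for $j=1,2$, which contradicts the hypothesis that $L$ has simple characteristics. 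Hence $\mathcal{P}_{1}$ has simple characteristics in $\Omega$, and the identical argument with $p_{1}$ and $p_{2}$ interchanged handles $\mathcal{P}_{2}$.

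The main obstacle, if any, is the algebraic bookkeeping needed to establish the factorization $\det\ell=\det(\Lambda_{11})\,p_{1}\,p_{2}$; once this product formula is in place, the conclusion is an immediate application of the product rule.
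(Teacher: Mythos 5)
Your proof is correct, and it takes a somewhat different route from the paper's. The paper first shows that the four complex roots of $\det P(x,\lambda)=0$ are distinct, deduces that $T=-X$ has distinct eigenvalues $\lambda_1\neq\lambda_2$ at each $x$, and then writes $\mathcal{P}_1(x,D)=(D_1+\lambda_1D_2)(D_1+\lambda_2D_2)$ and $\mathcal{P}_2(x,D)=(D_1+\overline{\lambda}_1D_2)(D_1+\overline{\lambda}_2D_2)$, reading off simple characteristics from the distinct linear factors. You instead recognize $p_j(x,\zeta)$ as $\det(\zeta_1 I+\zeta_2 T)$ (resp.\ $\det(\zeta_1 I+\zeta_2(\Lambda_{11}^{-1}\Lambda_{12}-T))$), take determinants in the Gohberg--Krein factorization to get the identity $\det\ell=\det(\Lambda_{11})\,p_1p_2$ in $\zeta$ (valid for complex $\zeta$ by polynomial extension), and then conclude via the Leibniz rule: a common zero of $p_j$ and $\nabla_\zeta p_j$ would force a common zero of $\det\ell$ and $\nabla_\zeta\det\ell$. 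Both arguments rest on the same factorization of $\det\ell$ through $p_1$ and $p_2$; the paper's version is more explicit (and reveals the concrete first-order splitting used elsewhere), while yours is slightly more economical because it never needs to name the eigenvalues or verify that a pair of distinct linear factors yields simple characteristics --- it works directly with Definition~\ref{simple}. One small point: the cancellation of the factor $\det(\lambda I+T)$ is indeed legitimate since it is a monic, hence nonzero, polynomial and polynomial rings are integral domains; stating this as you did is appropriate.
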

\begin{proof}
From \eqref{simple}, $L$ has simple characteristics in $\Omega$ means 
\[
\det\ell(x,\zeta)\mbox{ has distinct roots in }\zeta\,\,\text{for each $x\in \Omega$}.
\]
where $\ell$ is the principal symbol of $L$. We notice that by \eqref{P_def}, in fact $P(x,\lambda)=\ell(x;\zeta)$ with $\zeta=(\lambda,1)$. Therefore $\det P(x,\lambda)$ has distinct roots  in $\lambda$ for each $x\in\Omega$,
Moreover, from the factorization of $P(x,\lambda)$, see \eqref{factor}, i.e.
\[
P(x,\lambda)=(\lambda-X^{\ast})\Lambda_{11}(\lambda-X)\,\,\text{with $X$ which depends on $x$},
\]
it is easy to see that $\text{Spec}(X)$ is included in the set of all the roots of ${\rm det} P(\lambda,x)=0$ with respect to $\lambda\in \mathbb{C}$. Therefore if $\text{det}P(x, \lambda)=0$ in $\lambda$ has distinct roots, $X$ has distinct eigenvalues and so does $T=-X$ for each fixed $x\in\Omega$. Denote the distinct eigenvalues of $T$ by $\lambda_1,\lambda_2$. Then by \eqref{P1} and \eqref{P2}, we can rewrite 
\[\begin{aligned}
{\cal P}_1(x,D)&=D_1^2+(\lambda_1+\lambda_2)D_1D_2+\lambda_1\lambda_2\\
&=(D_1+\lambda_1D_2)(D_1+\lambda_2D_2)
\end{aligned}\]  
and 
\[
{\cal P}_2(x,D)=(D_1+\overline{\lambda}_1D_2)(D_1+\overline{\lambda}_2D_2)
\]
Since $\lambda_1\neq\lambda_2$, again by the definition \eqref{simple}, ${\cal P}_1,{\cal P}_2$ have simple characteristics. 
\end{proof}

Based on the above observations and lemmas, we have the following Carleman estimates for ${\cal P}$.

\begin{theorem}\label{Thm-Carleman}
Suppose \eqref{eq0} has $G^{\sigma}$ coefficients and $L$ satisfies \eqref{symm} and \eqref{elliptic}. Let $\cal P$ be the matrix differential operator induced by $L$ and defined in \eqref{eqnew} and $p_1,p_2$ be the principal symbols of $\mathcal{P}_1,\mathcal{P}_2$ respectively. Suppose $L$ has simple characteristics in $\Omega$. Then there exists $\nu_0>0$ depending only on $p_1(0,\R^2\setminus\{0\})$ and $p_2(0,\R^2\setminus\{0\})$ and $\tau_0>0$ such that if $\nu_0<\frac{1}{\sigma-1}$, then for $\nu_0<\nu<\frac{1}{\sigma-1}$ and $\tau>\tau_0$, we have 
\begin{equation}\label{main_Car}
\|e^{\frac{\tau}{\nu}|x|^{-\nu}}|x|^{-1}\nabla w\|_{L^2(B_{R_0}(0))}
+\tau\|e^{\frac{\tau}{\nu}|x|^{-\nu}}|x|^{-\nu-2}w\|_{L^2(B_{R_0}(0))}
\leq C \|e^{\frac{\tau}{\nu}|x|^{-\nu}}{\cal P}w\|_{L^2(B_{R_0}(0))}
\end{equation}
for $w\in G^{\sigma}$ flat at the origin, where $C$ and $R_0$ are independent of $w$ and $\tau$. 
\end{theorem}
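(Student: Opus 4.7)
The plan is to exploit the block-diagonal structure of $\mathcal{P}$ and reduce the vector-valued estimate to the scalar Carleman estimate of Lemma~\ref{L_result}, applied componentwise to $\mathcal{P}_1$ and $\mathcal{P}_2$. Writing $w=(w_1,w_2,w_3,w_4)^\top$, one has $\mathcal{P}w=(\mathcal{P}_1 w_1,\mathcal{P}_1 w_2,\mathcal{P}_2 w_3,\mathcal{P}_2 w_4)^\top$. By Lemma~\ref{G}, the coefficients of $\mathcal{P}_1$ and $\mathcal{P}_2$ belong to $G^\sigma\subset C^\infty$; by Lemma~\ref{P_simple}, both operators have simple characteristics in $\Omega$. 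Taking $\nu_0$ to be the larger of the two threshold values supplied by Lemma~\ref{L_result} for $\mathcal{P}_1$ and $\mathcal{P}_2$ (and $R_0,\tau_0$ likewise adjusted), the hypothesis $\nu_0<1/(\sigma-1)$ permits fixing $\nu\in(\nu_0,1/(\sigma-1))$ as in the statement together with an auxiliary exponent $\nu_1\in(\nu,1/(\sigma-1))$.

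To invoke \eqref{v_est} with the substitution $v_j:=e^{\frac{\tau}{\nu}|x|^{-\nu}}w_j$, I need $v_j\in C^\infty(\Omega)$. Since each $w_j\in G^\sigma$ is flat at the origin, Lemma~\ref{flat} applied with exponent $\nu_1$ produces $\tilde w_j\in C^\infty(\Omega)$ flat at the origin such that $w_j=e^{-|x|^{-\nu_1}}\tilde w_j$. Hence
\[
v_j=e^{\frac{\tau}{\nu}|x|^{-\nu}-|x|^{-\nu_1}}\tilde w_j,
\]
and because $\nu_1>\nu$ the exponent $\frac{\tau}{\nu}|x|^{-\nu}-|x|^{-\nu_1}$ tends to $-\infty$ faster than any polynomial as $x\to 0$. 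Together with the flatness of $\tilde w_j$, this makes $v_j$ of class $C^\infty(\Omega)$ and flat at the origin, so the hypotheses of Lemma~\ref{L_result} are met.

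Applying \eqref{v_est} with $P=\mathcal{P}_1$ for $j=1,2$ and $P=\mathcal{P}_2$ for $j=3,4$, and using the conjugation identity $P_\tau v_j=e^{\frac{\tau}{\nu}|x|^{-\nu}}Pw_j$ (which follows directly from $v_j=e^{\frac{\tau}{\nu}|x|^{-\nu}}w_j$), one obtains
\[
\tau\bigl\||x|^{-\nu-2}v_j\bigr\|_{L^2(B_{R_0})}+\bigl\||x|^{-1}\nabla v_j\bigr\|_{L^2(B_{R_0})}\le C\bigl\|e^{\frac{\tau}{\nu}|x|^{-\nu}}(\mathcal{P}w)_j\bigr\|_{L^2(B_{R_0})}.
\]
The first term on the left is already $\tau\|e^{\frac{\tau}{\nu}|x|^{-\nu}}|x|^{-\nu-2}w_j\|_{L^2}$. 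For the gradient, the Leibniz identity $\nabla v_j=e^{\frac{\tau}{\nu}|x|^{-\nu}}\nabla w_j-\tau|x|^{-\nu-2}x\,e^{\frac{\tau}{\nu}|x|^{-\nu}}w_j$ together with the triangle inequality yields
\[
\bigl\||x|^{-1}e^{\frac{\tau}{\nu}|x|^{-\nu}}\nabla w_j\bigr\|_{L^2}\le\bigl\||x|^{-1}\nabla v_j\bigr\|_{L^2}+\tau\bigl\||x|^{-\nu-2}e^{\frac{\tau}{\nu}|x|^{-\nu}}w_j\bigr\|_{L^2},
\]
and the extra term is absorbed into the first left-hand side of the scalar estimate for $\tau>\tau_0$ large. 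Summing over $j=1,\dots,4$ and passing between $\ell^1$ and $\ell^2$ norms on the four components delivers \eqref{main_Car}.

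The only subtle point---and the one I would regard as the real ``obstacle''---is arranging that $v_j$ genuinely lies in $C^\infty(\Omega)$ so that Lemma~\ref{L_result} applies. This is precisely where the Gevrey flatness of $w$ and the decomposition of Lemma~\ref{flat} are indispensable, and it is what forces both the inequality $\nu_0<1/(\sigma-1)$ in the hypotheses and the room to choose $\nu_1\in(\nu,1/(\sigma-1))$ above $\nu$. Once this is in place, the rest of the argument is purely organizational, since the diagonal structure of $\mathcal{P}$ has already decoupled the components.
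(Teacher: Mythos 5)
Your proof is correct and follows essentially the same route as the paper's: verify via Lemmas~\ref{G} and~\ref{P_simple} that $\mathcal{P}_1,\mathcal{P}_2$ meet the hypotheses of Lemma~\ref{L_result}, use Lemma~\ref{flat} with an auxiliary exponent strictly between $\nu$ and $1/(\sigma-1)$ (the paper's $\nu+\ep$ is your $\nu_1$) to justify that $e^{\frac{\tau}{\nu}|x|^{-\nu}}w$ is $C^\infty$, then apply the scalar estimate componentwise and combine. The only cosmetic difference is that you make explicit the Leibniz/triangle step passing from $|x|^{-1}\nabla v$ to $|x|^{-1}e^{\frac{\tau}{\nu}|x|^{-\nu}}\nabla w$, which the paper compresses into an ``and therefore'' (and your phrase ``absorbed into the first left-hand side'' would be better stated as ``bounded by the right-hand side via the first left-hand term''); the substance is identical.
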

\begin{proof}
From Lemma~\ref{G} and Lemma~\ref{P_simple}, we know $\mathcal{P}_1,\mathcal{P}_2$ both have $G^{\sigma}$ coefficients and simple characteristics. 
Therefore by applying Lemma~\ref{L_result} to $\mathcal{P}_1$ and $\mathcal{P}_2$, there exist $\tau_0,\nu_0>0$ such that for any $\nu>\nu_0$ and $v\in C^{\infty}(\Omega)$, we have
\begin{equation}\label{vj_est}
\tau\|r^{-\nu-2}v\|_{L^2(B_{R_0}(0))}
+\|r^{-1}\nabla v\|_{L^2(B_{R_0}(0))}
\leq C \|\mathcal{P}_{j,\tau}v\|_{L^2(B_{R_0}(0))}
\end{equation}
for some $C,R_0>0$, where $r=|x|$, $\mathcal{P}_{j,\tau}=e^{\frac{\tau}{\nu}|x|^{-\nu}}\mathcal{P}_je^{-\frac{\tau}{\nu}|x|^{-\nu}}$ and $C,R_0$ are independent of $v$ and $\tau$.

Now let $w\in G^{\sigma}$ be flat at the origin. By Lemma~\ref{flat}, for given $\nu<\frac{1}{\sigma-1}$, there exists $v_{\nu}\in C^{\infty}(\Omega)$ such that
\[
w=e^{-|x|^{-(\nu+\ep)}}v_{\nu},
\]
provided that $\ep>0$ satisfies $\nu+\ep<\frac{1}{\sigma-1}$.  
Hence we have
\[
w=e^{-\frac{\tau}{\nu}|x|^{-\nu}}v_{\tau,\nu},\,\,\tau>0
\]
with 
\[
v_{\tau,\nu}=e^{\frac{\tau}{\nu}|x|^{-\nu}-|x|^{-(\nu+\ep)}}v_{\nu}\in C^{\infty}.
\]
By substituting  $v=v_{\tau,\nu}=e^{\frac{\tau}{\nu}|x|^{-\nu}}w$ into \eqref{vj_est}, we have for $\tau>\tau_0$,
\[\begin{aligned}
\tau\||x|^{-\nu-2}\big(e^{\frac{\tau}{\nu}|x|^{-\nu}}w\big)\|_{L^2(B_{R_0}(0))}
&+\||x|^{-1}\nabla \big(e^{\frac{\tau}{\nu}|x|^{-\nu}}w\big)\|_{L^2(B_{R_0}(0))}\\
&\qquad\leq C \|\mathcal{P}_{j,\tau}\big(e^{\frac{\tau}{\nu}|x|^{-\nu}}w\big)\|_{L^2(B_{R_0}(0))},
\end{aligned}\]
and therefore
\[\begin{aligned}
\tau\|e^{\frac{\tau}{\nu}|x|^{-\nu}}|x|^{-\nu-2}w\|_{L^2(B_{R_0}(0))}
&+\|e^{\frac{\tau}{\nu}|x|^{-\nu}}|x|^{-1}\nabla w\|_{L^2(B_{R_0}(0))}\\
&\qquad\leq \widetilde{C} \|e^{\frac{\tau}{\nu}|x|^{-\nu}}\mathcal{P}_jw\|_{L^2(B_{R_0}(0))},
\end{aligned}\]
where $C,\widetilde{C},R_0$ are independent of $w$ and $\tau$. By recalling \eqref{eqnew}, we immediately have \eqref{main_Car}.
\end{proof}

\section{Proof of Theorem~\ref{main}}

We first show that $W\in G^\sigma$ defined by \eqref{W} is flat at the origin.
\begin{lemma}\label{W flat}
Suppose \eqref{eq0} has $G^{\sigma}$ coefficients, i.e. $A_{\alpha\beta}^{j\ell}, B_{\alpha\beta}^{\ell}, C_{\alpha\beta}$ are in $G^{\sigma}$. If $u$ is a solution of \eqref{eq0} and satisfies
\begin{equation}\label{*}
\int_{B_r(0)}|u|^2dx=O(r^N)\,\,\mbox{ for all }N \mbox{ as }r\to 0,
\end{equation}
then $W\in G^{\sigma}$ defined by \eqref{W}is flat at the origin.
\end{lemma}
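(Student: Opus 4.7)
The plan is to first upgrade the $L^{2}$-vanishing hypothesis \eqref{*} on $u$ to pointwise flatness of $u$ at the origin, and then to transport this flatness to $W$ via the explicit formula \eqref{W}, using that the coefficients $T$ and $\Psi$ are smooth (in fact $G^{\sigma}$) by Lemma~\ref{G}.

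First I would show that $u$ is flat at the origin in the classical sense, i.e.\ $\partial^{\alpha}u(0)=0$ for every multi-index $\alpha$. Since $u\in G^{\sigma}\subset C^{\infty}(\Omega)$, suppose for contradiction that some derivative of $u$ does not vanish at $0$ and pick a multi-index $\alpha_{0}$ of minimal length $k:=|\alpha_{0}|$ with $\partial^{\alpha_{0}}u(0)\neq 0$. Taylor's formula then gives
\[
u(x)=\sum_{|\beta|=k}\frac{\partial^{\beta}u(0)}{\beta!}\,x^{\beta}+R(x),\qquad |R(x)|\le C|x|^{k+1}
\]
in a neighborhood of $0$. The homogeneous polynomial $P(x):=\sum_{|\beta|=k}\tfrac{\partial^{\beta}u(0)}{\beta!}x^{\beta}$ is not identically zero, so by homogeneity $\int_{B_{r}(0)}|P|^{2}\,dx=c\,r^{2k+2}$ for some $c>0$, while the remainder contributes $O(r^{2k+4})$. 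Thus $\int_{B_{r}(0)}|u|^{2}dx\sim c\,r^{2k+2}$ as $r\to 0$, contradicting \eqref{*} for $N>2k+2$. Hence every derivative of $u$ vanishes at the origin.

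Next I would combine this flatness with the formula $W=[u,\Psi u+\partial_{1}u+T\partial_{2}u]^{\top}$. By Lemma~\ref{G}, $\Psi,T\in G^{\sigma}\subset C^{\infty}(\Omega)$. For any multi-index $\gamma$, the Leibniz rule applied to each entry of the second block gives a finite sum of products of the form (derivative of $\Psi$ or $T$)$\cdot$(derivative of $u$), and since every derivative of $u$ vanishes at $0$, every such product vanishes at $0$. The first block $u$ is flat at $0$ by the preceding paragraph. Hence $\partial^{\gamma}W(0)=0$ for all $\gamma$, which is precisely the statement that $W$ is flat at the origin. Finally, $W\in G^{\sigma}$ because each entry is a linear combination, with $G^{\sigma}$ coefficients, of $u$ and its first derivatives, and $G^{\sigma}$ is closed under differentiation and under multiplication by $G^{\sigma}$ functions.

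The only step requiring genuine care is the passage from the integral decay \eqref{*} to pointwise flatness; everything else is Leibniz bookkeeping together with the Gevrey regularity already proved for $T$ and $\Psi$. Note that the argument uses only the $C^{\infty}$ part of the Gevrey assumption at this stage, with the Gevrey class reappearing only to ensure that $W$ itself lies in $G^{\sigma}$ so that Theorem~\ref{Thm-Carleman} will be applicable to $W$ in the subsequent proof of Theorem~\ref{main}.
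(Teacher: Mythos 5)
Your proposal is correct, and it takes a genuinely different route from the paper in the key step of upgrading the integral decay \eqref{*} to pointwise flatness of $u$. The paper cites Theorem 17.1.4 of H\"ormander to obtain annulus estimates $\int_{r<|x|<2r}|r^{|\beta|}D^{\beta}u|^2dx=O(r^N)$ for $|\beta|\le 2$, deduces $D^{\beta}u(0)=0$ for $|\beta|\le 2$, and then invokes the equation \eqref{eq0} to bootstrap to all orders. You instead prove flatness of $u$ directly from Taylor's theorem: taking a minimal-order nonvanishing derivative, the homogeneous Taylor polynomial dominates and forces $\int_{B_r}|u|^2\gtrsim r^{2k+2}$, contradicting \eqref{*}. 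This argument is more elementary and self-contained, relying only on $u\in C^{\infty}$ (which you obtain from the standing hypothesis $u\in G^{\sigma}$ of Theorem~\ref{main}), and it sidesteps the somewhat terse bootstrap step in the paper's proof. The trade-off is that the paper's route, which derives higher regularity decay from the elliptic equation itself, is the version that would generalize to solutions not assumed smooth a priori; but since Theorem~\ref{main} already assumes $u\in G^{\sigma}$, your shortcut loses nothing in the present context. The second half of your argument --- transporting flatness of $u$ to $W$ by Leibniz and the $G^{\sigma}$ regularity of $T,\Psi$ from Lemma~\ref{G} --- coincides with what the paper does (and spells out the "immediately" in the paper's proof).
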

\begin{proof}
By Theorem 17.1.4 in \cite{hormander}, if $u$ is a solution of \eqref{eq0} and satisfies \eqref{*}, then for $|\beta|\leq 2$, we have  
\[
\int_{r<|x|<2r}|r^{|\beta|}D^{\beta}u|^2dx=O(r^N)\,\,\mbox{ for all }N\mbox{ as }r\to 0.
\]
Hence we have  $D^{\beta}u(0)=0$ for $|\beta|\leq 2$. Then using \eqref{eq0},  we have $D^{\beta}u(0)=0$ for all $\beta$.
By recalling \eqref{W}, we immediately have $W\in G^\sigma$ is flat at the origin.

\end{proof}

We are now ready to prove Theorem~\ref{main}.
\begin{proof}[Proof of Theorem~\ref{main}]
By Theorem \ref{Thm-Carleman} and Lemma \ref{W flat}, there exists $R_0$ such that
\[
\begin{aligned}
&\|e^{\frac{\tau}{\nu}|x|^{-\nu}}|x|^{-1}DW\|_{L^2(B_{R_0}(0))}
+\tau\|e^{\frac{\tau}{\nu}|x|^{-\nu}}|x|^{-\nu-2}W\|_{L^2(B_{R_0}(0))}\\
&\leq C \|e^{\frac{\tau}{\nu}|x|^{-\nu}}{\cal P}W\|_{L^2(B_{R_0}(0))}\\
&\leq C\|e^{\frac{\tau}{\nu}|x|^{-\nu}}\sum_{|\alpha|\leq 1}K_{\alpha}D^{\alpha}W\|_{L^2(B_{R_0}(0))}
\end{aligned}
\]
for $\tau>\tau_0$ with some $\tau_0$, where $C$ is independent of $\tau$ and $W$. 
Since $K_{\alpha}\in G^\sigma,\,\,|\alpha|\le 1$, we have
\[
\begin{aligned}
&\|e^{\frac{\tau}{\nu}|x|^{-\nu}}|x|^{-1}DW\|_{L^2(B_{R_0}(0))}
+\tau\|e^{\frac{\tau}{\nu}|x|^{-\nu}}|x|^{-\nu-2}W\|_{L^2(B_{R_0}(0))}\\
&\leq C'\Big(\|e^{\frac{\tau}{\nu}|x|^{-\nu}}DW\|_{L^2(B_{R_0}(0))}
+\|e^{\frac{\tau}{\nu}|x|^{-\nu}}W\|_{L^2(B_{R_0}(0))}\Big).
\end{aligned}
\]
Choose $R_1<R_0$ such that $R_1^{-1}>2C'$. Then we have
\[
\begin{aligned}
&\|e^{\frac{\tau}{\nu}|x|^{-\nu}}|x|^{-1}DW\|_{L^2(B_{R_1}(0))}
+\tau\|e^{\frac{\tau}{\nu}|x|^{-\nu}}|x|^{-\nu-2}W\|_{L^2(B_{R_1}(0))}\\
&\leq C'\Big(\|e^{\frac{\tau}{\nu}|x|^{-\nu}}DW\|_{L^2(B_{R_1}(0))}
+\|e^{\frac{\tau}{\nu}|x|^{-\nu}}W\|_{L^2(B_{R_1}(0))}\\
&\qquad+\|e^{\frac{\tau}{\nu}|x|^{-\nu}}DW\|_{L^2\big(B_{R_0}(0)\setminus B_{R_1}(0)\big)}
+\|e^{\frac{\tau}{\nu}|x|^{-\nu}}W\|_{L^2\big(B_{R_0}(0)\setminus B_{R_1}(0)\big)}\Big).
\end{aligned}
\]
Thus 
\[
\|e^{\frac{\tau}{\nu}|x|^{-\nu}}DW\|_{L^2(B_{R_1}(0))} \quad\mbox{and}\quad
\|e^{\frac{\tau}{\nu}|x|^{-\nu}}W\|_{L^2(B_{R_1}(0))}
\]
can be absorbed by 
$\|e^{\frac{\tau}{\nu}|x|^{-\nu}}|x|^{-1}DW\|_{L^2(B_{R_1}(0))}$ and 
$\tau\|e^{\frac{\tau}{\nu}|x|^{-\nu}}|x|^{-\nu-2}W\|_{L^2(B_{R_1}(0))}$ respectively. That is,
\[
\begin{aligned}
&\frac{1}{2}\|e^{\frac{\tau}{\nu}|x|^{-\nu}}|x|^{-1}DW\|_{L^2(B_{R_1}(0))}
+\frac{1}{2}\tau\|e^{\frac{\tau}{\nu}|x|^{-\nu}}|x|^{-\nu-2}W\|_{L^2(B_{R_1}(0))}\\
&\leq C'\Big(
\|e^{\frac{\tau}{\nu}|x|^{-\nu}}DW\|_{L^2\big(B_{R_0}(0)\setminus B_{R_1}(0)\big)}
+\|e^{\frac{\tau}{\nu}|x|^{-\nu}}W\|_{L^2\big(B_{R_0}(0)\setminus B_{R_1}(0)\big)}\Big).
\end{aligned}
\]
Since $|x|\leq R_1$ in $B_{R_1}(0)$ and $|x|\geq R_1$ in $B_{R_0(0)}\setminus B_{R_1}(0)$, we have
\[
\begin{aligned}
&e^{\frac{\tau}{\nu}R_1^{-\nu}}R_1^{-1}\|DW\|_{L^2(B_{R_1}(0))}
+\tau e^{\frac{\tau}{\nu}R_1^{-\nu}}R_1^{-\nu-2}\|W\|_{L^2(B_{R_1}(0))}\\
&\leq C''\Big(
e^{\frac{\tau}{\nu}R_1^{-\nu}}\|DW\|_{L^2\big(B_{R_0}(0)\setminus B_{R_1}(0)\big)}
+e^{\frac{\tau}{\nu}R_1^{-\nu}}\|W\|_{L^2\big(B_{R_0}(0)\setminus B_{R_1}(0)\big)}\Big).
\end{aligned}
\] 
After cancelling out $e^{\frac{\tau}{\nu}R_1^{-\nu}}$, the right hand side of the above inequality becomes bounded as $\tau\rightarrow\infty$. Therefore by letting $\tau\to\infty$, we obtain $\|W\|_{L^2\big(B_{R_1}(0)\big)}=0$, i.e. $W=0$ in $B_{R_1}(0)$. From  \eqref{W}, it is readily seen that $u=0$ in $B_{R_1}(0)$. 
By an standard argument, the set $\{x\in\Omega: u(x)=0\mbox{ and } u \mbox{ is flat at } x\}$ is open and closed in $\Omega$. Since $\Omega$ is connected, we have $u=0$ in $\Omega$.
\end{proof}

\section{General example for Theorem \ref{main}}\label{sec5}
In this section we give some general elastic operator which satisfies the assumptions of Theorem \ref{main}. Let  $L$ be a $2\times 2$ partial differential operator defined by 
\begin{equation}\label{example L}
(Lu)_{\alpha}=\sum_{\beta=1}^2\sum_{j,\ell=1}^2
A^{j\ell}_{\alpha\beta}\partial_j\partial_{\ell}u_{\beta},
\end{equation}
where
\begin{equation}\label{15}
[A_{\alpha\beta}^{11}]=\begin{pmatrix}
a&b\\
b&c
\end{pmatrix}, \quad
[A_{\alpha\beta}^{12}]+[A_{\alpha\beta}^{21}]=\begin{pmatrix}
2b&c+d\\
c+d&2e
\end{pmatrix},\quad
[A_{\alpha\beta}^{22}]=\begin{pmatrix}
c&e\\
e&f
\end{pmatrix}
\end{equation}
with real valued functions $a, b ,c ,d, e, f\in G^\sigma$. Then $A_{\alpha\beta}^{j\ell}$ satisfies \eqref{symm} and \eqref{symm2}. 
  
\begin{lemma}
Assume that $a, c, ac-b^2>0$, 
\[
f>\max (\frac{b^2c}{a^2},\frac{2ab^2c-b^4}{a^3}).
\] 
and let 
\[
e=\frac{bc}{a},\qquad d=\frac{2b^2-ac}{a}.
\]
Then $L$ satisfies \eqref{elliptic} and it has simple characteristic.
\end{lemma}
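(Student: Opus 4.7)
The plan is to compute the principal symbol $\ell(x,\xi)$ of $L$ for the coefficients in \eqref{15}, use the prescribed relations $e=bc/a$ and $d=(2b^2-ac)/a$ (which give $c+d=2b^2/a$) to factor $\det\ell$, and then read off both strong ellipticity and simple characteristics from the resulting factorization. A direct substitution into
\[
\ell_{12}=b\xi_1^2+(c+d)\xi_1\xi_2+e\xi_2^2
\]
yields the key identity $\ell_{12}=(b/a)\ell_{11}$ with $\ell_{11}=a\xi_1^2+2b\xi_1\xi_2+c\xi_2^2$, and hence
\[
\det\ell=\ell_{11}\bigl(\ell_{22}-\tfrac{b^2}{a^2}\ell_{11}\bigr)=\ell_{11}\cdot\Bigl(p\xi_1^2+\tfrac{2bp}{a}\xi_1\xi_2+\bigl(f-\tfrac{b^2c}{a^2}\bigr)\xi_2^2\Bigr),
\]
where $p:=(ac-b^2)/a>0$ by hypothesis.

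For strong ellipticity I would use that $\ell(x,\xi)$ is real symmetric for real $\xi$, so positive-definiteness of $\ell(x,\xi)$ is equivalent to $\ell_{11}>0$ and $\det\ell>0$ on $\xi\neq 0$. The former is immediate from $a,c>0$ and $ac-b^2>0$. The latter reduces to checking that the second factor in the display above is positive-definite as a binary form in $\xi$, which I can verify by reading off the signs of its leading coefficient $p>0$, its constant coefficient $f-b^2c/a^2>0$ (by the hypothesis $f>b^2c/a^2$), and its discriminant $4p\bigl((2ab^2c-b^4)/a^3-f\bigr)<0$ (by the hypothesis $f>(2ab^2c-b^4)/a^3$).

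For simple characteristics, I would observe that over $\mathbb{C}$ the quartic $\det\ell(x,\zeta)$ splits, via the above factorization, into four linear factors; by Definition~\ref{simple}, the statement amounts to these four factors being pairwise distinct, i.e., to the four characteristic roots $\lambda=\zeta_1/\zeta_2$ being distinct. Each quadratic factor has negative real discriminant and so contributes a complex conjugate pair sharing the common real part $-b/a$, with imaginary parts of magnitudes $\sqrt{ac-b^2}/a$ and $\sqrt{(f-b^2c/a^2)/p-b^2/a^2}$ respectively. Equating these magnitudes and simplifying via the algebraic identity $(ac-b^2)^2+2ab^2c-b^4=a^2c^2$ reduces the coincidence of the two pairs to the isolated exceptional equation $f=c^2/a$; thus for any $f$ in the hypothesized range that avoids this single value, all four roots are distinct and simple characteristics follows.

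The main obstacle I anticipate is precisely this last bookkeeping step: writing down the coincidence equation for the two imaginary-part magnitudes and recognizing the perfect-square identity $(ac-b^2)^2+2ab^2c-b^4=a^2c^2$ that pinpoints the single exceptional value and makes the verification of simple characteristics transparent.
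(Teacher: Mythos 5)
Your factorization $\det\ell = \ell_{11}\bigl(\ell_{22}-\tfrac{b^2}{a^2}\ell_{11}\bigr)$ and the second factor you obtain agree with the paper's display, which writes it as $\tfrac{ac-b^2}{a^2}\bigl[a\zeta_1^2+2b\zeta_1\zeta_2+\tfrac{a^2f-b^2c}{ac-b^2}\zeta_2^2\bigr]$ --- the same quadratic after multiplying through by the scalar you call $p$. For strong ellipticity your route is a bit cleaner than the paper's: you apply Sylvester's criterion to the $2\times2$ symmetric matrix $\ell(x,\xi)$ for each fixed real $\xi\neq0$, checking $\ell_{11}>0$ and $\det\ell>0$, whereas the paper rewrites the Legendre--Hadamard form $\sum A_{\alpha\beta}^{j\ell}\eta_\alpha\eta_\beta\xi_j\xi_\ell$ as an $\eta$-form times an $\eta$-dependent $\xi$-form and argues positivity of the latter. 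Both are correct and land on exactly the same sign conditions.

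The one point you should flag more loudly is the exceptional value $f=c^2/a$. You located it correctly, and at the corresponding step the paper invokes ``due to $f\neq c^2/a$.'' But your own identity $(ac-b^2)^2+2ab^2c-b^4=a^2c^2$ gives $c^2/a=\tfrac{2ab^2c-b^4}{a^3}+\tfrac{(ac-b^2)^2}{a^3}$, and together with $c^2/a>b^2c/a^2$ (from $ac-b^2>0$) this shows $c^2/a$ strictly exceeds \emph{both} lower bounds appearing in the hypothesis; so the stated condition on $f$ does \emph{not} rule out $f=c^2/a$. Your proposal hedges this with the proviso ``that avoids this single value,'' while the paper simply asserts $f\neq c^2/a$ without having hypothesized it: both are silently adding an assumption not present in the lemma as stated. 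This is a genuine gap in the lemma's hypothesis (not a defect peculiar to your argument), and the natural fix is to strengthen the assumption on $f$ --- e.g.\ require $f>c^2/a$, which by the inequalities above already implies the other two lower bounds --- or to append $f\neq c^2/a$ explicitly.
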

\begin{proof}
We first prove that $L$ satisfies \eqref{elliptic}. For any vectors $\eta=[\eta_1,\eta_2]^{\top}\in\R^2$, $\xi=[\xi_1,\xi_2]^{\top}\in \R^2$, 
\[\begin{aligned}
&\sum_{\alpha,\beta=1}^2\sum_{j,\ell=1}^2 
A_{\alpha\beta}^{j\ell}\eta_{\alpha}\eta_{\beta}\xi_j\xi_{\ell}\\
&=\xi_1^2\eta^{\top}\begin{pmatrix}
a&b\\
b&c
\end{pmatrix}\eta
+\xi_1\xi_2\eta^{\top}\begin{pmatrix}
2b&c+d\\
c+d&2e
\end{pmatrix}\eta
+\xi_2^2\eta^{\top}\begin{pmatrix}
c&e\\
e&f
\end{pmatrix}\eta\\
&=\xi_1^2\eta^{\top}\begin{pmatrix}
a&b\\
b&c
\end{pmatrix}\eta
+\xi_1\xi_2\eta^{\top}\begin{pmatrix}
2b&c+\frac{2b^2-ac}{a}\\
c+\frac{2b^2-ac}{a}&\frac{2bc}{a}
\end{pmatrix}\eta
+\xi_2^2\eta^{\top}\begin{pmatrix}
c&\frac{bc}{a}\\
\frac{bc}{a}&f
\end{pmatrix}\eta\\
&=\eta^{\top}\begin{pmatrix}
a&b\\
b&c
\end{pmatrix}\eta\,
\xi^{\top}\begin{pmatrix}
1&\frac{b}{a}\\
\frac{b}{a}&
\frac
 {\frac{c}{a}\eta^{\top}\begin{pmatrix}
 a&b\\
 b&\frac{af}{c}
 \end{pmatrix}\eta
} 
{\eta^{\top}\begin{pmatrix}
 a&b\\
 b&c
 \end{pmatrix}\eta
}
\end{pmatrix}\xi.
\end{aligned}\]
Since $\begin{pmatrix}a&b\\b&c\end{pmatrix}$ is positive, it suffices to show 
\[
\begin{pmatrix}
1&\frac{b}{a}\\
\frac{b}{a}&
\frac
 {\frac{c}{a}\eta^{\top}\begin{pmatrix}
 a&b\\
 b&\frac{af}{c}
 \end{pmatrix}\eta
} 
{\eta^{\top}\begin{pmatrix}
 a&b\\
 b&c
 \end{pmatrix}\eta
}
\end{pmatrix}\mbox{ is positive.}
\]
Then using $ac-b^2>0$ and $f>\frac{2ab^2c-b^4}{a^3}$ which we have from the assumptions, it is not difficult to show that the above matrix is positive for any $\eta\neq (0,0)^{\top}$.

Next we prove that $L$ has simple characteristics. The principal symbol $\ell(x,\zeta)$ of $L$ and its determinant are given as
\[
\begin{aligned}
\ell(x,\zeta)&=\sum_{j,\ell}A^{j\ell}_{\alpha\beta}\zeta_j\zeta_{\ell}\\
&=\begin{pmatrix}
a&b\\
b&c
\end{pmatrix}\zeta_1\zeta_1
+\begin{pmatrix}
2b&c+\frac{2b^2-ac}{a}\\
c+\frac{2b^2-ac}{a}&\frac{2bc}{a}
\end{pmatrix}\zeta_1\zeta_2
+\begin{pmatrix}
c&\frac{bc}{a}\\
\frac{bc}{a}&f
\end{pmatrix}\zeta_2\zeta_2\\
&=\begin{pmatrix}
a\zeta_1^2+2b\zeta_1\zeta_2+c\zeta_2^2 
&\frac{b}{a}(a\zeta_1^2+2b\zeta_1\zeta_2+c\zeta_2^2)\\
\frac{b}{a}(a\zeta_1^2+2b\zeta_1\zeta_2+c\zeta_2^2)
& \frac{c}{a}(a\zeta_1^2+2b\zeta_1\zeta_2+\frac{af}{c}\zeta_2^2)
\end{pmatrix}
\end{aligned}
\]
and
\[
\det(\ell(x,\zeta))=(a\zeta_1^2+2b\zeta_1\zeta_2+c\zeta_2^2)
(\frac{ac-b^2}{a^2})[a\zeta_1^2+2b\zeta_1\zeta_2+\frac{a^2f-b^2c}{ac-b^2}\zeta_2^2],
\]
respectively.
Since $ac-b^2>0$ and $f>\frac{2ab^2c-b^4}{a^3}$, this two polynomials
\[
a\zeta_1^2+2b\zeta_1\zeta_2+c\zeta_2^2;\quad a\zeta_1^2+2b\zeta_1\zeta_2+\frac{a^2f-b^2c}{ac-b^2}\zeta_2^2
\]
both have simple roots separably. Moreover, due to $f\neq\frac{c^2}{a}$, we have 
\[
\frac{a^2f-b^2c}{ac-b^2}\neq c.
\]
Therefore $\det(\ell(x,\zeta))=0$ in $\zeta$ has simple roots.
\end{proof}

\subsection*{Acknowledgement.} The authors sincerely thank Professor Jenn-Nan Wang for many helpful discussions and suggestions.
The second author would like to thank the support from National Center for Theoretical Sciences (NCTS) for his
stay in National Taiwan University, Taipei, Taiwan and the partial supports from Grant-in-Aid
for Scientific Research (15K21766 and 15H05740) of the Japan Society
for the Promotion of Science.


\bibliographystyle{plain}
\bibliography{ref}

\end{document}